\documentclass[11pt]{article}

\usepackage[margin=1.15in]{geometry}
\usepackage{amsmath,amssymb,amsthm}
\usepackage{hyperref}

\newtheorem{theorem}{Theorem}[section]
\newtheorem{proposition}[theorem]{Proposition}
\newtheorem{lemma}[theorem]{Lemma}
\newtheorem{remark}[theorem]{Remark}

\title{One-point extensions of Euclidean Ramsey sets}
\author{Mostafa Mirabi}
\date{}

\begin{document}

\maketitle


\begin{abstract}
Let $X$ be a finite Euclidean Ramsey set. We prove that adjoining any point outside the affine hull of $X$ gives another Euclidean Ramsey set, answering a conjecture of Ivan, Leader, and Walters. We first give an elementary product proof under the additional assumptions that the orthogonal projection of the new point lies in $\operatorname{conv}(X)$ and that its distance from $\operatorname{aff}(X)$ is sufficiently large. We then prove the general case by combining the product theorem for $E$-Ramsey configurations and K\v{r}\'{\i}\v{z}'s orbit-gluing theorem with a cyclic construction. No transitivity assumption on $X$ is needed.
\end{abstract}


\section{Introduction}
A finite set $X$ in a Euclidean space is called Ramsey if, for every positive integer $k$, there is an integer $N$ such that every $k$-colouring of $\mathbb{R}^N$ contains a monochromatic isometric copy of $X$. The problem of determining which finite Euclidean sets are Ramsey goes back to Erd\H{o}s, Graham, Montgomery, Rothschild, Spencer and Straus \cite{EGMRSS}.

Ivan, Leader and Walters \cite{ILW} recently studied generalized prisms and obtained a number of new constructions of Ramsey sets. One consequence of their main result is that if $X$ is subsoluble, then adding a point outside the hyperplane containing $X$ again gives a Ramsey set. They asked whether subsolubility can be replaced simply by the assumption that $X$ is Ramsey. This is their Conjecture 8.

We prove the conjecture.

\begin{theorem}\label{thm:main}
Let $X$ be a finite Ramsey set in a Euclidean space, and let $z$ be a point outside $\operatorname{aff}(X)$. Then $X\cup\{z\}$ is Ramsey.
\end{theorem}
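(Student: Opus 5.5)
The plan is to reduce to a product-type statement and then transfer to the actual configuration via orbit-gluing. Write $z = p + h u$, where $p$ is the orthogonal projection of $z$ onto $\operatorname{aff}(X)$, $u$ is a unit normal to $\operatorname{aff}(X)$, and $h>0$.

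\textbf{Special case.} First treat the situation where $p\in\operatorname{conv}(X)$ and $h$ is large. Here I would use the classical product theorem: $X\times\{0,1\}$, or more generally $X\times Y$ for Ramsey $X$ and $Y$, is Ramsey. One realizes $X\cup\{z\}$ inside a product $X\times T$, where $T$ is a suitable simplex. Write $p=\sum_x\lambda_x x$ as a convex combination. Place copies of $X$ at heights along a direction orthogonal to $\operatorname{aff}(X)$, and choose the second factor so that some point of the product is at the correct distances from a copy of $X$. The product structure supplies the monochromatic copy. Largeness of $h$ is what leaves room to choose the second-factor coordinates: the required distances $|z-x|^2 = |p-x|^2+h^2$ must be matched by points of the form $(x', t)$, with $t$ ranging in a simplex.

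\textbf{General case.} I would drop both assumptions in the following order.
\begin{itemize}
\item Use the notion of $E$-Ramsey configurations (Ramsey with respect to a fixed group action) together with its product theorem. Then apply K\v{r}\'{\i}\v{z}'s orbit-gluing theorem: if $X$ is Ramsey and $G$ is a soluble group acting by isometries, a union of $G$-orbits built around $X$ is Ramsey.
\item Apply this with a cyclic group $G=\mathbb{Z}_m$ rotating in a plane orthogonal to $\operatorname{aff}(X)$. Embed $z$ as one point of a regular $m$-gon orbit $\{p + h(\cos(2\pi j/m)u+\sin(2\pi j/m)v)\}$, where $v$ is a second normal direction.
\item The configuration $X\cup G z$ is then $X$ glued to a cyclic orbit. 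Since the rotation fixes $\operatorname{aff}(X)$ pointwise, every element of $G$ fixes $X$. K\v{r}\'{\i}\v{z}'s theorem (cyclic groups being soluble), together with the Ramsey property of $X$, should then show that $X\cup Gz$ is Ramsey.
\item Finally pass to the subset $X\cup\{z\}$, since subsets of Ramsey sets are Ramsey.
\end{itemize}

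\textbf{Expected obstacle.} The main difficulty is that K\v{r}\'{\i}\v{z}'s theorem needs the whole configuration to be an orbit-type structure on which a soluble group acts transitively on appropriate pieces. $X$ itself is not assumed transitive, so the gluing must be done in the $E$-Ramsey framework. There one needs $X$ to be $E$-Ramsey for the trivial action and the orbit part to be handled separately.

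I would first try to verify a compatibility statement: the product theorem for $E$-Ramsey configurations lets one take the product of $X$ (arbitrary Ramsey, hence $E$-Ramsey) with the $G$-orbit. The remaining step is then to embed $X\cup\{z\}$ isometrically into that product. This embedding is where the special-case geometry reappears, and a cyclic construction with $m$ large should remove the need for $p\in\operatorname{conv}(X)$ and for $h$ large.
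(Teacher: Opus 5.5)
Your general-case step has a genuine gap, and its central intermediate claim is false. You want $X\cup Gz$ to be Ramsey, where $G$ is a cyclic group of rotations in a plane normal to $\operatorname{aff}(X)$ that fixes $\operatorname{aff}(X)$ pointwise. Already for $X=\{x\}$, this set is a regular $m$-gon together with its centre (three collinear points if $m=2$). That set is not spherical, hence not Ramsey by the theorem of Erd\H{o}s et al. More generally, $X\cup Gz$ can be spherical only if $\lVert z-c_X\rVert$ equals the circumradius of $X$, where $c_X$ is the circumcentre of $X$ in $\operatorname{aff}(X)$.

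So the version of K\v{r}\'{\i}\v{z}'s theorem you invoke (soluble-group orbits ``built around'' a Ramsey set) is not a valid statement. What is available is the orbit-gluing theorem. If $F$ is $E$-Ramsey for an equivalence relation $E$ (not a group action), and $b$ is an isometry of $F$ respecting $E$, then one may also merge the classes of $z,bz,\dots,b^{r-1}z$. Apply this to your configuration, starting with $X$ as one class and the points of $Gz$ as singletons. Gluing makes $Gz$ monochromatic. But every element of $G$ fixes $X$ pointwise, so every orbit lies inside $X$ or inside $Gz$. Hence the colour of $X$ is never linked to that of $z$.

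Your fallback of a product $X\times P$ fails for a different reason. A copy $X\times\{q\}$ and any further point $(x',q')$ put the foot of the new point at $x'\in X$. So you only reach the trivial case $y\in X$, where $y$ is the foot of $z$. The same obstruction affects your special-case sketch with $X\times T$.

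The missing idea is an isometry of the configuration that carries a point of the $X$-class to a point whose foot over the copy of $X$ is $y\notin X$. The paper obtains this with a diagonal cyclic construction:
\begin{enumerate}
\item Adjoin unconstrained points $a_i=(1-\tfrac in)x_0+\tfrac in y$, $0\le i\le n$. Then $C_n=X\cup\{a_0,\dots,a_n\}$ is $E_n$-Ramsey with $X$ the only non-singleton class; this is an easy free-extension lemma.
\item Pass to $C_n^{n+1}$ by the product theorem, and embed $X$ as $D(x)=(x,a_1,\dots,a_n)$.
\item Take $b$ to be the cyclic shift of coordinates. This is a permutation of product factors, not a rotation normal to $\operatorname{aff}(X)$.
\end{enumerate}
Then $bD(x_0)=(y,x_0,a_1,\dots,a_{n-1})$ and $\lVert D(x)-bD(x_0)\rVert^2=\lVert x-y\rVert^2+\lVert y-x_0\rVert^2/n$. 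So gluing with $r=2$ makes $D(X)\cup\{bD(x_0)\}$ monochromatic. This set is a copy of $X$ plus a point over $y$ at height $h_n=\lVert y-x_0\rVert/\sqrt n$. Since $h_n\to0$, choose $n$ with $h_n\le\lambda$. A final product with the two-point set $\{0,t\}$, $t=\sqrt{\lambda^2-h_n^2}$, then reaches any prescribed height $\lambda>0$. This removes both of your special-case restrictions at once.
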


The argument developed in two steps. The first is an elementary diagonal product construction. If the orthogonal projection of the new point lies in $\operatorname{conv}(X)$, it proves the result once the height is sufficiently large. This is Proposition~\ref{prop:large-height}. The same argument also shows why the height cannot in general be made arbitrarily small by that particular construction.

The second step is the one that removes the restriction. We use K\v{r}\'i\v{z}'s language of $E$-Ramsey configurations \cite{Kriz}. A cyclic product construction gives one-point extensions at heights
$$
\frac{\lVert y-x_0\rVert}{\sqrt{n}},
$$
which tend to zero, and K\v{r}\'i\v{z}'s orbit-gluing theorem makes the two required classes monochromatic. The usual product theorem then increases the height to any prescribed nonzero value.

\textbf{Note.} The argument here was obtained independently of the recent preprint of Moore \cite{Moore}. The partial result in Proposition~\ref{prop:large-height} and  the proof of Theorem~\ref{thm:main} were circulated privately to Ivan, Leader and Walters in June 2026.  Moore's preprint appeared on arXiv on August 10, 2026. His proof is different from the one given here.


\section{A first product argument}
We use the standard facts that the Ramsey property is invariant under nonzero scaling, is inherited by subsets, and is closed under finite Cartesian products \cite{EGMRSS}.

Let
$
X=\{x_1,\ldots,x_m\}\subseteq \mathbb{R}^d
$ 
be Ramsey, and let $y\in\operatorname{conv}(X)$. Define
$$
\rho_X(y)^2=
\min\left\{
\sum_{i=1}^m p_i\lVert x_i-y\rVert^2:
 p_i\geq 0,\ \sum_{i=1}^m p_i=1,\ \sum_{i=1}^m p_i x_i=y
\right\}.
$$
The admissible set is nonempty and compact, so the minimum exists.

\begin{proposition}\label{prop:large-height}
Let $X=\{x_1,\ldots,x_m\}\subseteq\mathbb{R}^d$ be a finite Ramsey set, and let $y\in\operatorname{conv}(X)$. If $\lambda\neq 0$ and
$
|\lambda|\geq \rho_X(y),
$
then
$
(X\times\{0\})\cup\{(y,\lambda)\}\subseteq\mathbb{R}^{d+1}
$
is Ramsey.
\end{proposition}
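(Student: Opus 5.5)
The plan is to realize an isometric copy of $(X\times\{0\})\cup\{(y,\lambda)\}$ inside a finite Cartesian product of scaled copies of $X$ and one two-point set. We then invoke the standard facts recalled above: the Ramsey property survives nonzero scaling, finite products and passing to subsets, and a two-point set is Ramsey. Throughout we use that two finite point sets with a bijection preserving all pairwise distances are congruent, so it suffices to match squared distances.

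\textbf{Step 1 (the weighted diagonal).} Fix an admissible vector $p=(p_1,\dots,p_m)$ attaining the minimum in the definition of $\rho_X(y)$, so $p_k\ge 0$, $\sum p_k=1$, $\sum p_kx_k=y$ and $\sum p_k\lVert x_k-y\rVert^2=\rho_X(y)^2$. Let $K=\{k:p_k>0\}$ and set
$$
Y=\prod_{k\in K}\sqrt{p_k}\,X ,
$$
which is Ramsey as a product of nonzero scalings of $X$. For each $i$ let $D_i=(\sqrt{p_k}\,x_i)_{k\in K}$ (the weighted diagonal point), and let $W=(\sqrt{p_k}\,x_k)_{k\in K}$. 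Then $\lVert D_i-D_l\rVert^2=\sum_k p_k\lVert x_i-x_l\rVert^2=\lVert x_i-x_l\rVert^2$, so $\{D_i\}$ is a copy of $X$. Moreover
$$
\lVert W-D_i\rVert^2=\sum_{k}p_k\lVert x_k-x_i\rVert^2=\sum_k p_k\lVert x_k-y\rVert^2+\lVert y-x_i\rVert^2 .
$$
Here we expanded around $y$ and the cross term vanishes because $\sum p_k(x_k-y)=0$. The right-hand side equals $\rho_X(y)^2+\lVert y-x_i\rVert^2$, which is exactly $\lVert (y,\rho_X(y))-(x_i,0)\rVert^2$. Using real weights $\sqrt{p_k}$ instead of repeating factors avoids any rationality problem with $p$.

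\textbf{Step 2 (raising the height).} Put $t=\sqrt{\lambda^2-\rho_X(y)^2}\ge 0$, which is defined by the hypothesis $|\lambda|\ge\rho_X(y)$. Consider $Y\times\{0,t\}$, which is Ramsey; if $t=0$ we simply use $Y$. Take the points $(D_i,0)$ and $(W,t)$. The distances among the $(D_i,0)$ are unchanged. The squared distance from $(W,t)$ to $(D_i,0)$ becomes $\rho_X(y)^2+t^2+\lVert y-x_i\rVert^2=\lambda^2+\lVert y-x_i\rVert^2$. These are precisely the pairwise distances of $(X\times\{0\})\cup\{(y,\lambda)\}$, matched via $x_i\mapsto(D_i,0)$ and $(y,\lambda)\mapsto(W,t)$. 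Since $\lambda\neq0$, the new point is distinct from all the others. Hence the target set is congruent to a subset of a Ramsey set, and is therefore Ramsey.

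The main point needing care is Step 1: choosing the correct product and extra point so that the barycentric condition kills the cross term. Once the weights $\sqrt{p_k}$ are used, this is a one-line expansion. The only other subtlety is the degenerate case $\rho_X(y)=0$ (for instance $y\in X$). There the two-point factor with $t=|\lambda|>0$ alone supplies the height, and the argument above covers it uniformly. The construction gives no way to make the height smaller than $\rho_X(y)$, consistent with the limitation mentioned in the introduction.
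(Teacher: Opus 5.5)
Your proposal is correct and takes essentially the same approach as the paper. Both use the weighted diagonal $(\sqrt{p_k}\,x)_k$ and the point $(\sqrt{p_k}\,x_k)_k$ inside $\prod_k\sqrt{p_k}\,X$, where the barycentre condition kills the cross term, and then take a product with the two-point set $\{0,t\}$ to raise the height to $|\lambda|$; the only differences (using the exact minimiser rather than any admissible $p$ with $\sigma\le|\lambda|$, and dropping the factors with $p_k=0$ instead of keeping them as $\{0\}$) are cosmetic.
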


\begin{proof}
By reflecting in the last coordinate, it is enough to consider $\lambda>0$. Choose admissible weights $p_1,\ldots,p_m$ such that
$$
\sigma^2=\sum_{i=1}^m p_i\lVert x_i-y\rVert^2\leq \lambda^2.
$$
For each $i$, the set $\sqrt{p_i}X$ is Ramsey when $p_i>0$, while for $p_i=0$ it is the one-point set $\{0\}$. Hence
$
Y=\prod_{i=1}^m \sqrt{p_i}X
$
is Ramsey.

Inside $Y$, define
$$
D(x)=(\sqrt{p_1}x,\ldots,\sqrt{p_m}x),\qquad x\in X,
$$
and
$
w=(\sqrt{p_1}x_1,\ldots,\sqrt{p_m}x_m).
$ 
For $x,x'\in X$,
$$
\lVert D(x)-D(x')\rVert^2
=\sum_{i=1}^m p_i\lVert x-x'\rVert^2
=\lVert x-x'\rVert^2.
$$
Thus $D(X)$ is an isometric copy of $X$.

Also, for $x\in X$,
\begin{align*}
\lVert D(x)-w\rVert^2
&=\sum_{i=1}^m p_i\lVert x-x_i\rVert^2\\
&=\sum_{i=1}^m p_i\lVert (x-y)+(y-x_i)\rVert^2\\
&=\lVert x-y\rVert^2
+2\left\langle x-y,\sum_{i=1}^m p_i(y-x_i)\right\rangle
+\sum_{i=1}^m p_i\lVert x_i-y\rVert^2\\
&=\lVert x-y\rVert^2+\sigma^2,
\end{align*}
since $\sum_i p_i x_i=y$. Therefore
$
D(X)\cup\{w\}
$
is isometric to
$
(X\times\{0\})\cup\{(y,\sigma)\}.
$
As a subset of the Ramsey set $Y$, it is Ramsey.

If $\lambda=\sigma$, we are done. If $\lambda>\sigma$, put
$
a=\sqrt{\lambda^2-\sigma^2}.
$
The two-point set $\{0,a\}$ is Ramsey, so $Y\times\{0,a\}$ is Ramsey. It contains
$
\{(D(x),0):x\in X\}\cup\{(w,a)\},
$
and for $x\in X$,
$$
\lVert (D(x),0)-(w,a)\rVert^2
=\lVert x-y\rVert^2+\sigma^2+a^2
=\lVert x-y\rVert^2+\lambda^2.
$$
Hence this subset is isometric to
$
(X\times\{0\})\cup\{(y,\lambda)\}.
$
The case $\lambda<0$ follows by reflection.
\end{proof}

\begin{remark}\label{rem:obstruction}
The proposition does not by itself give arbitrary height. If $y\in\operatorname{conv}(X)\setminus X$, then $\rho_X(y)>0$. In fact, in any diagonal construction of the form used above, the barycentre condition forces the extra squared height to be
$
\sum_i p_i\lVert x_i-y\rVert^2.
$ 
Thus a different idea is needed to produce heights tending to zero.
\end{remark}



\section{\texorpdfstring{$E$-Ramsey}{E-Ramsey} configurations and the full theorem}

We recall the part of K\v{r}\'i\v{z}'s machinery that we need. A finite subset of a Euclidean space will be called a configuration. Let $F$ be a configuration and let $E$ be an equivalence relation on $F$. We say that $F$ is $E$-Ramsey if, for every positive integer $k$, there is an integer $N$ such that every $k$-colouring of $\mathbb{R}^N$ admits an isometric embedding $\varphi:F\to\mathbb{R}^N$ with
$$
xEy\quad\Longrightarrow\quad
\operatorname{col}(\varphi(x))=\operatorname{col}(\varphi(y)).
$$
Ordinary Ramsey-ness is the special case in which $E$ has one equivalence class.

We use two results of K\v{r}\'i\v{z} \cite[Theorems 3.2 and 4.1]{Kriz}.

\begin{itemize}
\item If $F_1$ is $E_1$-Ramsey and $F_2$ is $E_2$-Ramsey, then $F_1\times F_2$ is $(E_1\times E_2)$-Ramsey. The same holds for finite products.

\item Suppose $F$ is $E$-Ramsey and $b:F\to F$ is an isometry respecting $E$. For $z\in F$ and $r\geq 1$, let $U(E;z,b,r)$ be the smallest equivalence relation containing $E$ in which
$
z,bz,\ldots,b^{r-1}z
$
are equivalent. Then $F$ is $U(E;z,b,r)$-Ramsey.
\end{itemize}

The next simple observation lets us add auxiliary points without imposing any colour condition on them.
\begin{lemma}\label{lem:free-extension}
Let $X\subseteq\mathbb{R}^d$ be a nonempty finite Ramsey set, and let $C\subseteq\mathbb{R}^d$ be finite with $X\subseteq C$. Let $E_C$ be the equivalence relation on $C$ whose equivalence classes are $X$ and the singleton sets $\{c\}$ for $c\in C\setminus X$. Then $C$ is $E_C$-Ramsey.
\end{lemma}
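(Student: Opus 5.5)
The plan is to use the same $N$ that witnesses the Ramsey property of $X$, and to observe that the auxiliary points of $C\setminus X$ impose no colour constraints at all. Since $E_C$ has $X$ as its only non-singleton class, a map $\varphi:C\to\mathbb{R}^N$ respects $E_C$ exactly when it is an isometric embedding of $C$ under which $\varphi(X)$ is monochromatic. So the whole task is to extend a monochromatic copy of $X$ to an isometric copy of all of $C$.

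Fix $k$, and let $N_0$ be such that every $k$-colouring of $\mathbb{R}^{N_0}$ contains a monochromatic isometric copy of $X$. Put $N=\max(N_0,d)$; since $\mathbb{R}^{N_0}\subseteq\mathbb{R}^N$ by padding with zeros, we may assume $N\ge d$. Now take a $k$-colouring of $\mathbb{R}^N$. Restricting it to the coordinate copy of $\mathbb{R}^{N_0}$ produces an isometric embedding $\psi:X\to\mathbb{R}^N$ with $\psi(X)$ monochromatic.

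The key step is to extend $\psi$ to a global isometry. Any isometry between finite subsets of Euclidean spaces extends to an affine isometry of the ambient space, provided the target has enough dimension. Concretely, $\psi$ preserves all pairwise distances, so the Gram data of the vectors $x-x_1$ is preserved. Hence $\psi$ extends to an affine isometry from $\operatorname{aff}(X)$ onto $\operatorname{aff}(\psi(X))$. Because $N\ge d$, there is room to map the orthogonal complement of the direction space of $\operatorname{aff}(X)$ in $\mathbb{R}^d$ isometrically into the orthogonal complement of the direction space of $\operatorname{aff}(\psi(X))$ in $\mathbb{R}^N$. This yields an affine isometric embedding $\Phi:\mathbb{R}^d\to\mathbb{R}^N$ with $\Phi|_X=\psi$.

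Finally set $\varphi=\Phi|_C$. This is an isometric embedding of $C$, and the only nontrivial $E_C$-relations are between points of $X$, whose images lie in the monochromatic set $\psi(X)$. Hence $C$ is $E_C$-Ramsey. The only step that needs any care is the extension of $\psi$ to $\Phi$, and in particular the dimension count $N\ge d$, which is exactly why we enlarge $N_0$ to $N$.
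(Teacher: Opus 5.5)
Your proof is correct and follows essentially the same route as the paper: take a monochromatic copy $\psi(X)$ inside a coordinate subspace, extend $\psi$ to an affine isometry $\operatorname{aff}(X)\to\operatorname{aff}(\psi(X))$, and map the orthogonal complement $V^\perp$ of the direction space $V$ of $\operatorname{aff}(X)$ isometrically into the orthogonal complement $(V')^\perp$ of the direction space $V'$ of $\operatorname{aff}(\psi(X))$. The only difference is that you work in $\mathbb{R}^{\max(N_0,d)}$ instead of the paper's $\mathbb{R}^{N_0+d}$; this is a valid and slightly sharper choice, because $\dim (V')^\perp = N-\dim V\geq d-\dim V$ already holds whenever $N\geq d$.
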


\begin{proof}
Fix $k$. Choose $N_0$ such that every $k$-colouring of $\mathbb{R}^{N_0}$ contains a monochromatic copy of $X$. We claim that $\mathbb{R}^{N_0+d}$ witnesses that $C$ is $E_C$-Ramsey.

Take a $k$-colouring of $\mathbb{R}^{N_0+d}$ and restrict it to the coordinate subspace $\mathbb{R}^{N_0}\times\{0\}$. There is a monochromatic isometric copy $\varphi(X)$ in this subspace. Fix $x_*\in X$, and put
$
V=\operatorname{span}(X-x_*).
$
Let
$
V'=\operatorname{span}(\varphi(X)-\varphi(x_*)).
$
The map $\varphi$ extends to an affine isometry
$$
T:x_*+V\longrightarrow \varphi(x_*)+V'.
$$
Every $c\in C$ can be written uniquely as
$
c=x_*+p(c)+q(c),
$
with $p(c)\in V$ and $q(c)\in V^\perp$.

Since $\dim V' = \dim V$, the space $(V')^\perp\subseteq\mathbb{R}^{N_0+d}$ has dimension at least $d-\dim V$. We may therefore choose a linear isometric embedding
$$
J:V^\perp\longrightarrow (V')^\perp.
$$
Define
$$
\Phi(c)=T(x_*+p(c))+J(q(c)).
$$
Differences coming from the two terms are orthogonal, so $\Phi$ is an isometric embedding of $C$. For $x\in X$ we have $q(x)=0$, hence $\Phi(x)=\varphi(x)$. Thus the whole $E_C$-class $X$ is monochromatic, while the remaining classes are singletons. This proves the lemma.
\end{proof}

We now prove the coordinate form of Theorem~\ref{thm:main}.

\begin{theorem}\label{thm:coordinate}
Let $X\subseteq\mathbb{R}^d$ be a nonempty finite Ramsey set. Let $y\in\mathbb{R}^d$ and let $\lambda\in\mathbb{R}\setminus\{0\}$. Then
$$
Z=(X\times\{0\})\cup\{(y,\lambda)\}\subseteq\mathbb{R}^{d+1}
$$
is Ramsey.
\end{theorem}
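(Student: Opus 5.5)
The plan is to produce, for each $n$, a one-point extension $(X\times\{0\})\cup\{(y,h_n)\}$ with $h_n=\lVert y-x_0\rVert/\sqrt n$, where $x_0\in X$ is fixed, and $h_n\to 0$. Then, exactly as in the last step of Proposition~\ref{prop:large-height}, I would take a product with the two-point Ramsey set $\{0,a\}$, $a=\sqrt{\lambda^2-h_n^2}$, to reach the prescribed height $|\lambda|$. Choosing $n$ with $h_n<|\lambda|$ finishes the proof, and reflection handles $\lambda<0$. If $y=x_0$ for every choice, i.e.\ $y\in X$, the problem is already trivial after one product step, so I assume $y\neq x_0$.

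For the construction at height $h_n$, let $C\subseteq\mathbb{R}^d$ be $X$ together with the finitely many auxiliary points needed, at least $y$ (or a suitable affine image of $y$ relative to $x_0$). By Lemma~\ref{lem:free-extension}, $C$ is $E_C$-Ramsey, and so is the rescaled set $C/\sqrt n$. By the product theorem, $F=(C/\sqrt n)^n$ is $E_C^{\,n}$-Ramsey. Inside $F$ the diagonal $D(x)=(x,\dots,x)/\sqrt n$ is an isometric copy of $X$ lying in the single class $X^n$. Let $b$ be the cyclic shift of coordinates; it is an isometry of $F$ respecting $E_C^{\,n}$. The extra point will come from an orbit of $b$: a point such as $z=(x_0+\text{perturbation towards }y,\,x_0,\dots,x_0)/\sqrt n$, with the perturbation chosen so that the averaged coordinate is the right barycentre. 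The identity
\[
\frac1n\sum_i\lVert x-w_i\rVert^2=\Bigl\lVert x-\tfrac1n\sum_i w_i\Bigr\rVert^2+\frac1n\sum_i\Bigl\lVert w_i-\tfrac1n\sum_j w_j\Bigr\rVert^2
\]
from Proposition~\ref{prop:large-height} then shows that the distances from $D(x)$ to the orbit points are those of a point over $y$ at a height of order $\lVert y-x_0\rVert/\sqrt n$. I would then apply K\v{r}\'i\v{z}'s orbit-gluing theorem, possibly several times and for several orbits, to enlarge $E_C^{\,n}$ so that the whole $b$-orbit of $z$ becomes one class.

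The main obstacle is linking the two classes: the one containing $D(X)$ and the one containing the glued orbit. Each is monochromatic, but they could receive different colours, while Ramseyness of $Z$ needs one monochromatic copy. Gluing the orbit of $z$ alone does not join it to $X^n$. The step I would try first is to arrange that some point of the $b$-orbit that is glued actually lies in an $X^n$-type class. For example, I would choose the orbit of a point mixing coordinates from $X$ and from the auxiliary set, so that orbit gluing together with transitivity forces the target point and a copy of $X$ into one class. If a single shift does not suffice, I would instead use a product of the shift with a coordinatewise isometry of $C$ that preserves $E_C$.

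Once a copy of $Z_n$ sits inside one class of a relation for which $F$ is Ramsey, the set $Z_n$ is Ramsey as a subset of a Ramsey configuration. The final step then follows by the product argument described in the first paragraph.
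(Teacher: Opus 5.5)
There is a genuine gap. It sits at the step you yourself flag as the main obstacle: you never link the class of $D(X)$ to the glued orbit, and in your set-up this cannot be done.

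In $F=(C/\sqrt n)^n$ with the relation $E_C^{\,n}$, every class is a product $K_1\times\cdots\times K_n$ with each $K_i$ equal to $X$ or to a singleton. So, for $|X|\ge 2$, the class $X^n$ containing your diagonal is the unique largest class. An isometry of $F$ respecting $E_C^{\,n}$ permutes the classes and preserves their sizes, so it maps $X^n$ onto itself. This covers the shift, and also any product of the shift with coordinatewise isometries of $C$ preserving $E_C$, since these must map $X$ to $X$. Hence the orbit of any point either lies entirely in $X^n$ or misses it entirely, and K\v{r}\'i\v{z}'s gluing never merges $X^n$ with anything new. Moreover, the shift fixes $D(X)$ pointwise, so gluing from a point of $D(X)$ is vacuous. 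Points already in $X^n$ only reproduce the diagonal construction of Proposition~\ref{prop:large-height}, whose height is bounded below as in Remark~\ref{rem:obstruction}.

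Your height estimate is also wrong. With the diagonal, the extra squared height is $\frac1n\sum_i\lVert w_i-y\rVert^2$. For your $z$, the first coordinate is $x_0+n(y-x_0)$ and the other $n-1$ coordinates are $x_0$, so this equals $(n-1)\lVert y-x_0\rVert^2$, which grows with $n$.

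The missing idea is to abandon the diagonal. The paper adds the equally spaced points $a_i=x_0+\frac in(y-x_0)$, $0\le i\le n$, to $C_n$ as free points and works in $C_n^{n+1}$ with the coordinatewise relation. It uses the non-diagonal copy $D(x)=(x,a_1,\dots,a_n)$, whose class $X\times\{a_1\}\times\cdots\times\{a_n\}$ is moved by the shift to a \emph{different} class. Orbit-gluing with $r=2$ then merges $z_0=D(x_0)$ with $bz_0=(a_n,a_0,\dots,a_{n-1})=(y,x_0,a_1,\dots,a_{n-1})$. This puts $D(X)\cup\{bz_0\}$ into a single class directly. The distance telescopes along the path: $\lVert D(x)-bz_0\rVert^2=\lVert x-y\rVert^2+\sum_{i=1}^n\lVert a_i-a_{i-1}\rVert^2=\lVert x-y\rVert^2+\lVert y-x_0\rVert^2/n$. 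So the small height comes from the consecutive steps of a path from $x_0$ to $y$, not from a variance about a barycentre, and only one shifted point is needed rather than a whole orbit. Your final step (the product with $\{0,a\}$ to reach height $|\lambda|$) and your treatment of the trivial case $y\in X$ do match the paper.
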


\begin{proof}
By reflection in the last coordinate, it is enough to treat $\lambda>0$.

First suppose that $y\in X$. Then $Z$ is a subset of
$
X\times\{0,\lambda\}.
$
The two-point set $\{0,\lambda\}$ is Ramsey, so the product theorem shows that $X\times\{0,\lambda\}$ is Ramsey. Hence $Z$ is Ramsey.

Now assume $y\notin X$, and fix $x_0\in X$. For an integer $n\geq 1$, define
$$
a_i=\left(1-\frac{i}{n}\right)x_0+\frac{i}{n}y,
\qquad i=0,1,\ldots,n.
$$
Thus $a_0=x_0$ and $a_n=y$. Put
$
C_n=X\cup\{a_0,a_1,\ldots,a_n\},
$
and let $E_n$ be the equivalence relation on $C_n$ whose only non-singleton class is $X$. By Lemma~\ref{lem:free-extension}, $C_n$ is $E_n$-Ramsey. The product theorem therefore gives that
$
C_n^{n+1}
$
is $E_n^{n+1}$-Ramsey, where the equivalence relation is taken coordinatewise.

Inside $C_n^{n+1}$, define
$
D(x)=(x,a_1,a_2,\ldots,a_n),$ for $ x\in X.
$
Let $b$ be the cyclic coordinate shift
$
b(u_0,u_1,\ldots,u_n)=(u_n,u_0,u_1,\ldots,u_{n-1}),
$
and set
$$
F_n=\bigcup_{r=0}^n b^rD(X).
$$
Let $E$ be the restriction of $E_n^{n+1}$ to $F_n$. Since the $E$-Ramsey property is inherited by subsets, $F_n$ is $E$-Ramsey. The map $b$ restricts to an isometry of $F_n$ and respects $E$.

Set
$
z_0=D(x_0)=(x_0,a_1,\ldots,a_n).
$
By K\v{r}\'i\v{z}'s orbit-gluing theorem with $r=2$, the configuration $F_n$ is $U(E;z_0,b,2)$-Ramsey. The $E$-class of $z_0$ contains
$$
D(X)=\{(x,a_1,\ldots,a_n):x\in X\},
$$
and
$$
bz_0=(a_n,a_0,a_1,\ldots,a_{n-1})
=(y,x_0,a_1,\ldots,a_{n-1}).
$$
The relation $U(E;z_0,b,2)$ merges the $E$-classes of $z_0$ and $bz_0$. It follows that
$
S_n=D(X)\cup\{bz_0\}
$ 
is Ramsey.

It remains only to compute the geometry of $S_n$. For $x,x'\in X$,
$
\lVert D(x)-D(x')\rVert^2=\lVert x-x'\rVert^2.
$ 
Also, for $x\in X$,
\begin{align*}
\lVert D(x)-bz_0\rVert^2
&=\lVert x-y\rVert^2+
\sum_{i=1}^n \lVert a_i-a_{i-1}\rVert^2.
\end{align*}
But
$
a_i-a_{i-1}=\frac{y-x_0}{n},
$ 
so
$$
\sum_{i=1}^n \lVert a_i-a_{i-1}\rVert^2
=\frac{\lVert y-x_0\rVert^2}{n}.
$$
Therefore $S_n$ is isometric to
$$
(X\times\{0\})\cup
\left\{\left(y,\frac{\lVert y-x_0\rVert}{\sqrt{n}}\right)\right\}.
$$
Write
$$
h_n=\frac{\lVert y-x_0\rVert}{\sqrt{n}}.
$$
Then $h_n\to 0$. Choose $n$ so large that $h_n\leq\lambda$. If $h_n=\lambda$, we are done. Otherwise let
$
t=\sqrt{\lambda^2-h_n^2}.
$ 
The product of the Ramsey set
$
(X\times\{0\})\cup\{(y,h_n)\}
$
with the Ramsey two-point set $\{0,t\}$ is Ramsey. It contains
$
\{(x,0,0):x\in X\}\cup\{(y,h_n,t)\},
$ 
which is isometric to
$
(X\times\{0\})\cup\{(y,\lambda)\}
$
because $h_n^2+t^2=\lambda^2$. This finishes the proof.
\end{proof}

\begin{proof}[Proof of Theorem~\ref{thm:main}]
Work in the affine span of $X\cup\{z\}$. After an isometry, we may identify $\operatorname{aff}(X)$ with $\mathbb{R}^d\times\{0\}$ and write
$
z=(y,\lambda)
$
with $y\in\mathbb{R}^d$ and $\lambda\neq 0$. The result is then exactly Theorem~\ref{thm:coordinate}.
\end{proof}

\begin{remark}
The role of K\v{r}\'i\v{z}'s theorem is quite specific. The cyclic construction by itself produces several copies of $X$ arranged around a product, but there is no transitive group available because $X$ is assumed only to be Ramsey. The $E$-Ramsey formulation lets us keep $X$ as one colour class, leave all auxiliary points unconstrained, and then glue two consecutive classes along the cyclic shift. This replaces the transitivity assumption used in the generalized-prism construction of \cite{ILW}.
\end{remark}

\section*{Acknowledgements}
The author would like to thank Maria-Romina Ivan, Imre Leader, and Mark Walters for formulating Conjecture 8. He is especially grateful to Imre Leader and Maria-Romina Ivan for helpful discussions concerning the conjecture and for reading earlier versions of the argument.

\medskip
\noindent 
The Taft School, Watertown CT 06795, USA, and\newline Wesleyan University, Middletown CT 06459, USA.\\
\noindent 
Email: \hyperlink{}{mmirabi@wesleyan.edu}\\
Website: \hyperlink{}{https://sites.google.com/site/mostafamirabi/}

\end{document}